\newcommand{\defeq}{\vcentcolon=}
\newenvironment{manualtheorem}[1]{%
  \manualtheoreminner
}{\endmanualtheoreminner}
\newtheorem{theorem}{Theorem}[section]
\newtheorem{lemma}{Lemma}[section]
\newtheorem{definition}{Definition}[section]
\newtheorem{example}{Example}[section]
\newtheorem*{remark}{Remark}
\date{}
\newcommand{\M}[2]{M_{#1,#2}}
\newcommand{\Mo}[2]{\overline{M}_{#1,#2}}
\newcommand{\PI}[1]{\pi_{[#1]}^\ast(\Delta)}
\newcommand{\I}[2]{\int_{\overline{M}_{#1,#2}}}
\title{A combinatorial proof of the $\lambda_g$ conjecture in genus 2}
\author{Taylor Rogers, Renzo Cavalieri}
\begin{document}

\maketitle

\begin{abstract}
    We give a simple combinatorial proof of the $\lambda_g$ conjectue in genus 2.  We use a description of the class $\lambda_2$ as a linear combination of boundary strata, and show the conjecture follows inductively from applications of the projection formula, string equation, and dilaton equation.
\end{abstract}


\section{Introduction}
One way that we study the geometry of moduli spaces of curves is by computing how its subvarieties intersect.  When a collection of subvarieties intersect in a finite number of points, such a number is called an intersection number, and it  is generally difficult to calculate.  One way to approach this is to exploit the recursive structure that intersection numbers possess.  For monomials of $\psi$-classes, examples of recursive structure are given by the string and dilaton equations (see Section \ref{sssec:psi} or \cite{VakilGW}).  We are interested in computing intersection numbers of monomials of $\psi$-classes againist the pullback  of the stratum
\[ \Delta =  \parbox{4cm}{
\begin{tikzpicture}
\Vertex[x=0, label = 1, size=1]{A}
\Vertex[x=2, size = 0.5]{B}
\draw (B) to [out =315, in=45, looseness = 10] (B);
\draw (A) -- (B);
\end{tikzpicture}}
\]

via the forgetful morphism $\pi_{[n]}:\Mo{2}{n} \to \Mo{2}{}$. To do so, we apply the projection formula, string equation, and dilaton equation and obtain the following theorem.
\begin{manualtheorem}{3.1}
For any positive integer $n$ and    partition $k_1+\dots+k_n=n+1$ of the integer $n+1$,  
\[\int_{\PI{n}}\psi_1^{k_1}\dots\psi_n^{k_n}=\frac{1}{24}\binom{n+1}{k_1,k_2,\dots,k_n}\]
\end{manualtheorem}
  Theorem \ref{thm:pullback} allows us to give a simple proof of the $\lambda_g$ conjecture for genus two,
  \begin{manualtheorem}{3.2}
For any positive integer $n$ and partition $k_1+\dots+k_n=n+1$ of the integer $n+1$,  
\[\I{2}{n}\lambda_2\psi_1^{k_1}\dots\psi_n^{k_n}=\frac{7}{24\cdot 8\cdot 30}\binom{n+1}{k_1,k_2,\dots,k_n}\]
\end{manualtheorem}
\noindent The proof of this theorem follows from Theorem \ref{thm:pullback} and a formula of Pixton \cite{pixton} which allows us to write $\lambda$-classes as linear combinations of strata. \\ 
\indent In Section 2 we establish the relevant background information on the Deligne-Mumford compactification of the moduli space of pointed curves and tautological intersection theory within this moduli spaces.  Then in Section 3 we establish the results above, in particular: the relations analogous to the dilaton and string equations and the proofs of Theorem \ref{thm:pullback} and Theorem \ref{thm:lambdatwo}.


\section{Background}
\subsection{Moduli Space of Stable n-Pointed Curves}
Here we give a concise presentation of the theory on moduli spaces of curves  which is needed for our result. More information and references regarding the material introduced in this section may be found in \cite{kockQC} and \cite{VakilGW}.

By $M_{0,n}$  we denote the fine moduli space whose points parameterize $n$-tuples of distinct points in $\mathbb{P}^1$ up to projective equivalence.  To understand $M_{0,n}$ we first look at the case for $n=4$.  Two quadruples $P\defeq(p_1,p_2,p_3,p_4)$ and $Q\defeq(q_1,q_2,q_3,q_4)$ are \textbf{projectively equivalent} if there exists M\"{o}bius transformation $\varphi$ such that $\varphi(p_i)=q_i$ for $i = 1, \ldots, 4$.  For any three distinct points $x,y,z\in\mathbb{P}^1$, there exists a unique M\"{o}bius transformation $\varphi\in\textrm{Aut}(\mathbb{P}^1)$ such that $x$, $y$, and  $z$ are taken to  $0$, $1$ and $\infty$ respectively under $\varphi$. It follows that any quadruple $P$ is equivalent  to a unique quadruple of the form $(0,1,\infty, t)$, and therefore $M_{0,4}$ is  isomorphic to $\mathbb{P}^1\setminus\{0,1,\infty\}$.  Generalizing this idea, we find $M_{0,n}$ as the fine moduli space given by the $(n-3)$-fold product of $M_{0,4}$ without diagonals, i.e. 
\[M_{0,n}\cong \M_{0,4}\times\dots\times M_{0,4}\setminus\{\cup \delta_{ij}\},\]
where $\delta_{ij}$ denotes the locus of points where the $i$-th and $j$-th coordinates are equal.

Since $M_{0,4}\cong \mathbb{P}^1\setminus\{0,1,\infty\}$ has dimension one,  this implies that 
$\dim{M_{0,n}}=n-3$.
The space $\M{0}{n}$ is not compact, however, it admits a compactification of interest: the Deligne-Mumford compactification, $\Mo{0}{n}$.  This moduli space parameterizes a broader class of curves which we now characterize. 
\begin{definition}
A \textbf{tree of projective lines} is a connected curve such that 
\begin{enumerate}
    \item Each irreducible component is isomorphic to a projective line.
    \item  The points of intersection of the components are ordinary double points, i.e., nodes.
    \item The fundamental group is trivial.
\end{enumerate}
We call the irreducible components \textbf{twigs}.
\end{definition}
\begin{definition}
For $n\geq 3$ a \textbf{stable n-pointed rational curve} is a tree of projective lines with $n$ marked distinct points in the smooth locus, such that the sum of the number of marked points and nodes on each twig is at least three.
\end{definition}
$\Mo{0}{n}$ parameterizes isomorphism classes of stable $n$-pointed curves and contains $\M{0}{n}$ as a dense open set.  We call $\Mo{0}{n}\setminus \M{0}{n}$ the boundary of $\Mo{0}{n}$ and points in the boundary correspond to  nodal marked curves.  The space $\M{0}{n}$ admits a natural stratification, given by the equivalence class for the equivalence relation that declares two points equivalent  if their corresponding curves are homeomorphic as pointed curves.  
\begin{definition}
Given a rational stable $n$-pointed curve $C$ with marked points $p_1,\dots, p_n$ its \textbf{dual graph} is a tree defined to have:
\begin{enumerate}
    \item a vertex for each twig of $C$.
    \item an edge for each node of $C$ connecting the appropriate vertices.
    \item a labeled half edge corresponding to each marked point on the appropriate vertex.
\end{enumerate}
\end{definition}

Two marked curved are homeomorphic if and only if they have the same dual graph, and therefore dual graphs index the strata of $\M{0}{n}$.

\begin{example} \normalfont
Below are three examples of dual graphs of stable rational pointed curves\footnote{Legs on these graphs should be labeled, but, throughout the paper, we omit unimportant labels to keep the pictures cleaner.}, 
\[
\begin{tikzpicture}
\Vertex[x=0, size = 0.5]{B}
\node[above left =0.5cm of B] (D) {};
\node[above right =0.5cm of B] (E) {};
\node[below = 0.5cm of B] (F) {};
\draw (B) -- (D);
\draw (B) -- (E);
\draw (B) -- (F);
\end{tikzpicture}
\ \ \ \ \ \ \ \
\begin{tikzpicture}
\Vertex[x=0, size = 0.5]{A}
\Vertex [x=1, size = 0.5]{B}
\node[above right =0.5cm of B] (D) {};
\node[right =0.5cm of B] (E) {};
\node[below right= 0.5cm of B] (F) {};
\draw (B) -- (D);
\draw (B) -- (E);
\draw (B) -- (F);
\draw (A) -- (B);
\node[above left = 0.5cm of A] (G) {};
\node[below left = 0.5cm of A] (H) {};
\draw (A) -- (G);
\draw (A) -- (H);
\end{tikzpicture}
\ \ \ \ \ \ \ \
\begin{tikzpicture}
\Vertex[x=0, size = 0.5]{A}
\Vertex [x=2, size = 0.5]{B}
\node[above right =0.5cm of B] (D) {};
\node[right =0.5cm of B] (E) {};
\node[below right= 0.5cm of B] (F) {};
\draw (B) -- (D);
\draw (B) -- (E);
\draw (B) -- (F);
\node[above left = 0.5cm of A] (G) {};
\node[below left = 0.5cm of A] (H) {};
\draw (A) -- (G);
\draw (A) -- (H);
\Vertex[x=1, size = 0.5]{C}
\draw (A) -- (C);
\draw (C) -- (B);
\node[above = 0.5cm of C] (I) {};
\draw (C) --(I);
\end{tikzpicture}\]
On the other hand, the next two are the dual graphs of an unstable pointed rational curve and a stable but not rational curve respectively.
\[
\parbox{4cm}{
\begin{tikzpicture}
\Vertex[x=0, size = 0.5]{A}
\Vertex [x=2, size = 0.5]{B}
\node[above right =0.5cm of B] (D) {};
\node[right =0.5cm of B] (E) {};
\node[below right= 0.5cm of B] (F) {};
\draw (B) -- (D);
\draw (B) -- (E);
\draw (B) -- (F);
\node[above left = 0.5cm of A] (G) {};
\node[below left = 0.5cm of A] (H) {};
\draw (A) -- (G);
\draw (A) -- (H);
\Vertex[x=1, size = 0.5]{C}
\draw (A) -- (C);
\draw (C) -- (B);
\end{tikzpicture}}
\ \ \ \ \ \ \ \
\parbox{4cm}{
\begin{tikzpicture}
\Vertex[x=0, y=1, size = 0.5]{A}
\Vertex[x=0, y=-1, size=0.5]{C}
\Vertex[x=-1, y=0, size = 0.5]{B}
\Vertex[x=1, y=0, size=0.5]{D}
\draw (A) -- (B);
\draw (B) -- (C);
\draw (C) -- (D);
\draw (D) -- (A);
\node[above = 0.5cm of A] (E) {};
\node[left = 0.5cm of B] (F) {};
\node[below = 0.5cm of C] (G) {};
\node[right = 0.5cm of D] (H) {};
\draw (A) -- (E);
\draw (B) -- (F);
\draw (C) -- (G);
\draw (D) -- (H);
\end{tikzpicture}}
\]
\end{example}
This compactified moduli space has coarse generalization for curves of any genus; the space parameterizing curves of genus $g$ with $n$ marked points is denoted $\Mo{g}{n}$.  The theory of the moduli space of curves of genus $g$ is more complex, however the dimension of this space faithfully generalizes that of the genus $0$  space with $\dim \Mo{g}{n}=3g-3+n$ \cite{harrisMod}.  For our purposes we will only need the cases for $g\leq 2$ and really only care about $g=0$ and $g=1$.  
\subsection{Natural Morphisms}
We now discuss two natural morphisms on these moduli spaces: the forgetful and gluing morphisms.  We also briefly cover a form of the projection formula which will be of use.
\subsubsection{Forgetful Morphism}
Given an $(n+1)$-pointed curve $(C,p_1,\dots,p_{n+1})$ one can forget $p_{n+1}$ to obtain an $n$-pointed curve $(C,p_1,\dots,p_n)$, this idea leads to the forgetful morphism $\pi_{n+1}:\Mo{g}{n+1}\rightarrow \Mo{g}{n}$. \\
\indent Of course, the full definition requires addressing subtleties, we will address them now.  First, no forgetful morphism exists for $(g,n)=(0,3)$ or $(g,n)=(1,1)$, due to stability conditions.  Secondly, we can't always just forget a marked point, since doing so may result in an unstable curve.  In such cases,  an extra stabilization process must be introduced, called contraction.  This happens in the following two cases.
\begin{enumerate}
    \item If $p_{n+1}$ is on a twig (name we reserve for rational components) with two nodes and no other marked points, then we contract that twig after forgetting $p_{n+1}$.  For example 
    \[
\begin{tikzpicture}
\Vertex[x=0, size = 0.5]{A}
\Vertex [x=2, size = 0.5]{B}
\node[above right =0.5cm of B] (D) {};
\node[right =0.5cm of B] (E) {};
\node[below right= 0.5cm of B] (F) {};
\draw (B) -- (D);
\draw (B) -- (E);
\draw (B) -- (F);
\node[above left = 0.5cm of A] (G) {};
\node[below left = 0.5cm of A] (H) {};
\draw (A) -- (G);
\draw (A) -- (H);
\Vertex[x=1, size = 0.5]{C}
\draw (A) -- (C);
\draw (C) -- (B);
\node[above = 0.5cm of C] (I) {$p_{n+1}$};
\draw (C) --(I);
\end{tikzpicture}\]
   will be sent to 
   \[
\begin{tikzpicture}
\Vertex[x=0, size = 0.5]{A}
\Vertex [x=2, size = 0.5]{B}
\node[above right =0.5cm of B] (D) {};
\node[right =0.5cm of B] (E) {};
\node[below right= 0.5cm of B] (F) {};
\draw (B) -- (D);
\draw (B) -- (E);
\draw (B) -- (F);
\node[above left = 0.5cm of A] (G) {};
\node[below left = 0.5cm of A] (H) {};
\draw (A) -- (G);
\draw (A) -- (H);
\draw (A) -- (B);
\end{tikzpicture}\]
   under the forgetful morphism $\pi_{n+1}$.
   \item If $p_{n+1}$ is on a twig with exactly one other marked point ${p_i}$ and exactly one node, then the twig is contracted after forgetting $p_{n+1}$ and ${p_i}$ is placed on what used to be the node.  For example, 
   \[\begin{tikzpicture}
   \Vertex[x=0, size = 0.5]{A}
   \Vertex[x=1, size = 0.5]{B}
   \node[above left =0.5cm of A](C){$p_{n+1}$};
   \node[below left =0.5cm of A](D){$p_i$};
    \node[above right =0.5cm of B](E){};
   \node[below right =0.5cm of B](F){};
   \node[right=0.5cm of B](G){};
   \draw (A)--(C);
\draw (A)--(D);
    \draw (B)--(E);
   \draw (B)--(F);
   \draw (B)--(G);
   \draw(A)--(B);
   \end{tikzpicture}\]
   is sent to 
   \[\begin{tikzpicture}
   \Vertex[x=0, size=0.5]{B}
   \node[left =0.5cm of B](D){$p_i$};
    \node[above right =0.5cm of B](E){};
   \node[below right =0.5cm of B](F){};
   \node[right=0.5cm of B](G){};
   \draw (B)--(D);
    \draw (B)--(E);
   \draw (B)--(F);
   \draw (B)--(G);
   \end{tikzpicture}\]
   under the forgetful morphism $\pi_{n+1}$.
\end{enumerate}


\subsubsection{Gluing Morphisms}
Another natural idea is to identify two marked points, thus gluing two curves together.  That is, given an $(n+1)$-pointed curve  of genus $g$ and and $(n'+1)$-pointed curve of genus $g'$ we may obtain a genus $g+g'$ curve with $(n+n')$-marked points, defining a map $\textrm{gl}:\Mo{g}{n+1}\times \Mo{g'}{n'+1}\rightarrow \Mo{g+g'}{n+n'}$.

 One could do this with a genus $g$ curve with $(n+2)$-marked points to obtain a curve of genus $g+1$ with $n$-marked points, giving rise to a gluing morphism $\textrm{gl}:\Mo{g}{n+2}\rightarrow \Mo{g+1}{n}$. 
 
 In fact one may naturally generalize this idea to gluing multiple pairs of points, and observe that the closure of any stratum in $\Mo{g}{n}$ is the image of some appropriately defined gluing morphism.

\begin{example}\normalfont
\label{ex:glue334}
Given the following two curves 
\[
\begin{tikzpicture}
\Vertex[x=0, size = 0.5]{B}
\node[above left =0.5cm of B] (D) {};
\node[above right =0.5cm of B] (E) {};
\node[below = 0.5cm of B] (F) {$p$};
\draw (B) -- (D);
\draw (B) -- (E);
\draw (B) -- (F);
\Vertex[x=2, size = 0.5]{A}
\node[above left =0.5cm of A] (H) {};
\node[above right =0.5cm of A] (I) {};
\node[below = 0.5cm of A] (J) {$p'$};
\draw (A) -- (H);
\draw (A) -- (I);
\draw (A) -- (J);
\end{tikzpicture}\]
we may identify $p$ and $p'$ under the gluing morphism $\textrm{gl}:\Mo{0}{3}\times \Mo{0}{3}\rightarrow \Mo{0}{4}$ to obtain the curve 
\[
\begin{tikzpicture}
\Vertex[x=1,size=0.5]{A}
\Vertex[x=0,size=0.5]{B}
\draw(A)--(B);
\node[above left =0.5cm of B] (D) {};
\node[below left =0.5cm of B] (E) {};
\node[above right =0.5cm of A] (F) {};
\node[below right =0.5cm of A] (G) {};
\draw(B)--(D);
\draw(B)--(E);
\draw(A)--(F);
\draw(A)--(G);
\end{tikzpicture}\]
\end{example}
\begin{example}\normalfont
Given the following curve, 
\[
\begin{tikzpicture}
\Vertex[x=0, size = 0.5]{B}
\node[above left =0.5cm of B] (D) {};
\node[above right =0.5cm of B] (E) {$p'$};
\node[below = 0.5cm of B] (F) {$p$};
\draw (B) -- (D);
\draw (B) -- (E);
\draw (B) -- (F);
\end{tikzpicture}
\]
we may identify $p'$ and $p$ via the gluing morphism $\textrm{gl}:\Mo{0}{3}\rightarrow \Mo{1}{1}$ to obtain 
\[
\begin{tikzpicture}
\Vertex[x=2, size = 0.5]{B}
\draw (B) to [out =315, in=45, looseness = 10] (B);
\node[left=0.5cm of B] (A){};
\draw(A)--(B);
\end{tikzpicture}\]
\end{example}

\subsubsection{The Projection Formula}
The projection formula allows us to intersect, via appropriate applications of pushforwards and pullbacks, Chow classes that live on two different spaces connected by a well behaved morphism. For a more in depth coverage of the projection formula consult \cite{andallthat}, Section 1.3.6.

Given a flat and proper morphism $f:X\rightarrow Y$, a Chow class $\beta$ of $Y$ and $\alpha$ of $X$, the following relation holds:
\begin{description}
\item[Projection Formula] \[f_*((f^*\beta)\cdot \alpha)=(f_*\alpha)\cdot \beta \]
\end{description}
with multiplication in the Chow ring.  
The following picture illustrates the content of the projection formula.

\tikzset{every picture/.style={line width=0.75pt}} 

\begin{tikzpicture}[x=0.75pt,y=0.75pt,yscale=-1,xscale=1]

\draw   (242,22) -- (398,22) -- (398,178) -- (242,178) -- cycle ;
\draw    (241,239) -- (402,239) ;
\draw [color={rgb, 255:red, 74; green, 144; blue, 226 }  ,draw opacity=1 ]   (252,166) .. controls (641,124) and (17,83) .. (379,35) ;
\draw  [color={rgb, 255:red, 74; green, 144; blue, 226 }  ,draw opacity=1 ][fill={rgb, 255:red, 208; green, 2; blue, 27 }  ,fill opacity=1 ][line width=0.75]  (355.91,239.9) .. controls (355.9,237.96) and (357.47,236.34) .. (359.4,236.27) .. controls (361.34,236.21) and (362.91,237.73) .. (362.92,239.66) .. controls (362.92,241.6) and (361.36,243.22) .. (359.43,243.29) .. controls (357.49,243.35) and (355.92,241.83) .. (355.91,239.9) -- cycle ;
\draw  [color={rgb, 255:red, 74; green, 144; blue, 226 }  ,draw opacity=1 ][fill={rgb, 255:red, 208; green, 2; blue, 27 }  ,fill opacity=1 ][line width=0.75]  (315,239.5) .. controls (315,237.57) and (316.57,236) .. (318.5,236) .. controls (320.43,236) and (322,237.57) .. (322,239.5) .. controls (322,241.43) and (320.43,243) .. (318.5,243) .. controls (316.57,243) and (315,241.43) .. (315,239.5) -- cycle ;
\draw  [color={rgb, 255:red, 74; green, 144; blue, 226 }  ,draw opacity=1 ][fill={rgb, 255:red, 208; green, 2; blue, 27 }  ,fill opacity=1 ][line width=0.75]  (277,238.5) .. controls (277,236.57) and (278.57,235) .. (280.5,235) .. controls (282.43,235) and (284,236.57) .. (284,238.5) .. controls (284,240.43) and (282.43,242) .. (280.5,242) .. controls (278.57,242) and (277,240.43) .. (277,238.5) -- cycle ;
\draw    (319,190) -- (319,226) ;
\draw [shift={(319,228)}, rotate = 270] [color={rgb, 255:red, 0; green, 0; blue, 0 }  ][line width=0.75]    (10.93,-3.29) .. controls (6.95,-1.4) and (3.31,-0.3) .. (0,0) .. controls (3.31,0.3) and (6.95,1.4) .. (10.93,3.29)   ;
\draw [color={rgb, 255:red, 208; green, 2; blue, 27 }  ,draw opacity=1 ]   (281,23) -- (281,178) ;
\draw [color={rgb, 255:red, 208; green, 2; blue, 27 }  ,draw opacity=1 ]   (321,21) -- (321,177) ;
\draw [color={rgb, 255:red, 208; green, 2; blue, 27 }  ,draw opacity=1 ]   (362,21) -- (362,178) ;

\draw (350,252.4) node [anchor=north west][inner sep=0.75pt]    {$\textcolor[rgb]{0.29,0.56,0.89}{f_\ast\alpha} \cdot \textcolor[rgb]{0.82,0.01,0.11}{\beta }=f_{*}(\textcolor[rgb]{0.29,0.56,0.89}{\alpha} \cdot  \textcolor[rgb]{0.82,0.01,0.11}{f^\ast\beta })$};

\end{tikzpicture}
In this picture we see the pushforward of $f^*\beta\cdot\alpha$ gives $f_*\alpha\cdot\beta$ which we have represented by taking the points of $\beta$ and outlining them in blue.\\
\indent We remark that forgetful morphisms are flat and proper; hence one can use the projection formula.


\subsection{Tautological Intersection theory on $\Mo{g}{n}$}
We are mainly interested in the intersection numbers of particular Chow classes in $\Mo{g}{n}$.  In these compactified moduli spaces, the strata have a closure whose fundamental class gives a natural Chow class to work with.  In genus 0 and 1, these classes are of particular importance for this project.  In genus 0, these classes generate the Chow ring and in genus 1 they generate a particularly important subring of the Chow ring, the tautological ring \cite{VakilGW}.
\begin{remark} \normalfont
We follow the following convention of \cite{pixton}.  Every dual graph identifies a stratum and the stratum is an image of a gluing morphism.  However, these gluing morphisms are not always $1:1$, so the  fundamental class of the stratum is not always the same as the pushforward of the fundamental class via the gluing morphism. By  dual graphs we denote the pushforward of fundamental classes of products of moduli spaces under the gluing morphism.  As such if a stratum is identified by a dual graph $\Gamma$, the class of the closure of the stratum is then $\frac{1}{|\textrm{Aut}(\Gamma)|}[\Gamma]$.
\end{remark} 
The following examples illustrate this point.
\begin{example}\normalfont
\label{ex:bij}
In Example \ref{ex:glue334}, we obtained a stratum of $\Mo{0}{4}$ by gluing two strata from smaller moduli spaces. The gluing morphism provides a bijection 
\[\textrm{gl}_*:\Mo{0}{3}\times\Mo{0}{3}\rightarrow \Mo{0}{4}\]
Therefore, $ \textrm{gl}_*(1_{\Mo{0}{3} \times \Mo{0}{3}})$ agrees with the class of the stratum represented by the following dual graph
\[
\begin{tikzpicture}
\Vertex[x=1,size=0.5]{A}
\Vertex[x=0,size=0.5]{B}
\draw(A)--(B);
\node[above left =0.5cm of B] (D) {};
\node[below left =0.5cm of B] (E) {};
\node[above right =0.5cm of A] (F) {};
\node[below right =0.5cm of A] (G) {};
\draw(B)--(D);
\draw(B)--(E);
\draw(A)--(F);
\draw(A)--(G);
\end{tikzpicture}\]
 
\end{example}
\begin{example}\normalfont
Though the gluing morphism in Example \ref{ex:bij} is a bijection, injectivity often fails.  Take $\textrm{gl}: \Mo{0}{1,2,*,\bullet}\rightarrow\Mo{1}{2}$, this is a $2:1$ map onto its image.  Take a point in the stratum corresponding to the dual graph, $\Gamma$, depicted below
\[\begin{tikzpicture}
\Vertex[x=0, size=0.5]{A}
\node[above left =0.5cm of A] (B) {1};
\node[below left=0.5cm of A] (C) {2};
\draw (A) to [out =315, in=45, looseness = 10] (A);
\draw (A) -- (B);
\draw (A) -- (C);
 \end{tikzpicture}\]
 As noted in the remark, dual graphs denote the pushforward of fundamental classes of products and quotients of moduli spaces under the gluing morphism, here we have:
 \[[\Gamma]=\textrm{gl}_*[1_{\Mo{0}{1,2,*,\bullet}}]\]
 This points' preimage under $\textrm{gl}$ is given by two points which we may schematically depict as
 \[\begin{tikzpicture}
\Vertex[x=0, size=0.5]{A}
\node[above left =0.5cm of A] (B) {1};
\node[below left=0.5cm of A] (C) {2};
\node[above right =0.5cm of A] (D) {$*$};
\node[below right=0.5cm of A] (E) {$\bullet$};
\draw (A) -- (B);
\draw (A) -- (C);
\draw (A) -- (D);
\draw (A) -- (E);
\Vertex[x=6, size=0.5]{J}
\node[above left =0.5cm of J] (F) {1};
\node[below left=0.5cm of J] (G) {2};
\node[above right =0.5cm of J] (H) {$\bullet$};
\node[below right=0.5cm of J] (I) {$*$};
\draw (J) -- (F);
\draw (J) -- (G);
\draw (J) -- (H);
\draw (J) -- (I);
 \end{tikzpicture}\]
So, the class of the closure of the stratum is given by $\frac{1}{2}[\Gamma]$.\end{example}
We particularly care about two other families of classes given as Chern classes of some natural bundles on $\Mo{g}{n}$, which we now discuss.


\subsubsection{$\psi$-classes} \label{sssec:psi}
Given $\Mo{g}{n}$ we have the universal curve $\pi: C_{g,n}\rightarrow \Mo{g}{n}$.  Let $s_i$ be the section of $\pi$ corresponding to the $i$-th marked point.  Then the pullback by $s_i$ of the relative dualizing sheaf defines a bundle denoted $\mathbb{L}_i$. We then define the $i$-th \textbf{$\psi$-class} to be the first Chern class of $\mathbb{L}_i$, i.e., \[\psi_i=c_1(\mathbb{L}_i)\in A^{1}(\Mo{g}{n})\]
A more complete treatment of this may be found in $\cite{kocknotes}$.
In $g=0$ and $g=1$, the intersection numbers of monomials of $\psi$-classes are completely determined by the following two recursive structures and initial conditions.
\begin{description}
\item[String Equation] Given $g,n, k_1, \dots, k_n\in \mathbb{Z}^+$ with $k_1+\dots+k_n=3g-3+n+1$ and $2g-2+n>0$ then 
\[\I{g}{n+1}\psi_1^{k_1}\dots\psi_n^{k_n}=\sum_{i=1}^n\I{g}{n}\psi_1^{k_1}\dots \psi_i^{k_i-1}\dots \psi_n^{k_n},\]
adopting the notational convention that $\psi_i^{-1} = 0$.
\item[Dilaton Equation] Given $g,n, k_1, \dots, k_n\in \mathbb{Z}^+$ with $k_1+\dots+k_n=3g-3+n$ and $2g-2+n>0$ then 
\[\I{g}{n+1}\psi_1^{k_1}\dots \psi_n^{k_n}\psi_{n+1}=(2g-2+n)\I{g}{n}\psi_1^{k_1}\dots \psi_n^{k_n} . \]
\item[Initial Conditions] \[\I{1}{1}\psi_1=\frac{1}{24}\qquad \textrm{and}\qquad  \I{0}{3}1=1.\]
\end{description}
The above two relations are called the \textbf{dilaton equation} and the \textbf{string equation} \cite{kocknotes} respectively and they are the most important results of the background section for the purposes of this paper.  For a derivation of the initial condition see \cite{VakilGW} Section 3.13.

\begin{remark} \normalfont
While it is most common to see the string and dilaton equations expressed as identities among intersection numbers, in the course of their proof one readily sees that they are really identities among push-forwards of cycles (\cite{kocknotes}, Section 1.4). In particular, the dilaton equation is derived from the fact that $\pi_{n*}(\psi_n) = (2g-2+n)[1_{\Mo{g}{n}}]$, while string follows from the relation:
\begin{equation}
\label{eq:cyclestring}
    \pi_{n*}(\psi_1^{k_1}\dots\psi_n^{k_n})=\sum_{j=1}^n \psi_1^{k_1}\dots \psi_i^{k_i-1}\dots \psi_n^{k_n}.
\end{equation}
\end{remark}


\subsubsection{$\lambda$-classes}
Another type of class is obtained from the push-forward of the relative dualizing sheaf of the universal curve, which we call the \textbf{Hodge bundle}, denoted $\mathbb{E}_{g,n}$.  With this bundle  we may define \textbf{$\lambda$-classes} in a similar fashion to the $\psi$-classes as
 \[\lambda_i=c_i(\mathbb{E}_{g,n}).\] 
An important property of these classes is that they are stable under pullback via forgetful morphisms, i.e. $\pi_n^\ast \lambda_i = \lambda_i$.

The $\lambda_g$ conjecture (\cite{lambda},  later proven in \cite{lambdaPf}) gives a simple formula for calculating intersection numbers of monomials of $\psi$-classes on $\Mo{g}{n}$ along with a factor of the $\lambda_g$ class.  For ease of reference, the theorem is explicitly stated: 
\begin{theorem}[$\lambda_g$ Conjecture, \cite{lambdaPf}] \label{lambdag}
Let $k_1,\dots,k_n\in\mathbb{Z}^+$ such that $k_1+\dots+k_n=2g-3+n$.  Then 
\[\I{g}{n}\psi_1^{k_1}\dots\psi_n^{k_n}\lambda_g=\binom{2g+n-3}{k_1,\dots,k_n}\I{g}{1}\psi_1^{2g-2}\lambda_g\]
with initial condition
\[\I{g}{1}\psi_1^{2g-2}\lambda_g=\frac{2^{2g-1}-1}{2^{2g-1}}\frac{|B_{2g}|}{(2g)!}\]
where $B_n$ is the $n$-th Bernoulli number. \textnormal{\cite{Wiki}}
\end{theorem}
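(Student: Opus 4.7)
The plan is to establish the $\lambda_g$ conjecture by induction on $n$, taking the stated initial condition as the base case $n=1$. The organizing principle, in the spirit of the approach the paper develops for $g=2$, is to push the $\psi$-monomial down along a forgetful morphism so that $\lambda_g$, which is $\pi^\ast$-invariant, can be handled separately on a smaller moduli space via the projection formula.

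For the inductive step, fix $n+1$ marked points with $k_1,\dots,k_{n+1}\in\mathbb{Z}^+$ and $\sum k_i=2g-2+n$. If some $k_i=1$, relabel so that $k_{n+1}=1$; the cycle-level dilaton relation $\pi_{n+1*}(\psi_1^{k_1}\cdots\psi_n^{k_n}\psi_{n+1})=(2g-2+n)\psi_1^{k_1}\cdots\psi_n^{k_n}$, combined with the projection formula and the elementary identity $(2g-2+n)\binom{2g-3+n}{k_1,\dots,k_n}=\binom{2g-2+n}{k_1,\dots,k_n,1}$, reduces the integral to the $n$-pointed case where the inductive hypothesis closes the step. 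Since $\sum k_i = 2g-2+n$ together with $k_i\geq 2$ for all $i$ forces $n+1\leq 2g-3$, this dilaton reduction already takes care of every partition once $n\geq 2g-3$.

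The remaining, finitely many partitions with every $k_i\geq 2$ cannot be reduced by dilaton alone, and here the plan mirrors the paper's use of Pixton's formula for $\lambda_2$: apply Pixton's general boundary formula to write $\lambda_g$ as a $\mathbb{Q}$-linear combination of pushforwards from products of smaller moduli spaces under gluing morphisms. On each such term, the projection formula transports the $\psi$-monomial onto the glued factors, where either $n$ strictly decreases, or the genus strictly decreases, or — on compact-type strata — $\lambda_g$ splits multiplicatively as $\prod_v \lambda_{g_v}$ with $\sum g_v = g$. An induction on $g$ (with $n$ fixed) nested inside the induction on $n$, together with the cycle-level string equation to absorb the discrepancy between $\psi$-classes on a glued curve and those on each factor, in principle produces the desired multinomial.

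The main obstacle is exactly this last step. Pixton's formula for $\lambda_g$ in general is a sum over stable dual graphs whose cardinality and combinatorial weights grow sharply with $g$, and checking that the contributions recombine into the clean multinomial $\binom{2g-3+n}{k_1,\ldots,k_n}$ is itself a nontrivial identity. In particular, strata containing non-separating nodes (living in $\delta_0$) do not admit the factorization $\lambda_g=\prod_v \lambda_{g_v}$, and the cleanest route is to first invoke an auxiliary vanishing — for instance, Faber--Pandharipande's $\lambda_g\lambda_{g-1}$-vanishing outside the compact-type locus, or Getzler--Ionel vanishing — to discard the non-compact-type contributions before attempting the combinatorial bookkeeping. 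Absent such a vanishing, the naive generalization of the paper's argument to higher $g$ does not close.
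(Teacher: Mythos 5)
The first thing to note is that the paper does not prove this statement: Theorem~\ref{lambdag} is quoted from the literature (\cite{lambdaPf}) as background, and the paper's actual contribution is a new proof only of the $g=2$ specialization (Theorem~\ref{thm:lambdatwo}), obtained by writing $\lambda_2$ via Pixton's two-term formula and reducing everything to string/dilaton recursions on the pulled-back stratum $\pi_{[n]}^\ast(\Delta)$. So there is no in-paper proof to compare yours against, and a complete blind proof of the general statement would amount to reproving the full $\lambda_g$ conjecture.

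Your proposal correctly isolates the part of the argument that does generalize --- the cycle-level dilaton reduction via the projection formula, using that $\lambda_g$ is a pullback, together with the identity $(2g-2+n)\binom{2g-3+n}{k_1,\dots,k_n}=\binom{2g-2+n}{k_1,\dots,k_n,1}$ --- and, to your credit, you correctly locate and concede the gap: partitions with every $k_i\geq 2$ are not reachable by string/dilaton descent, and for those the boundary expression of $\lambda_g$ must do real work. Two remarks sharpen this. First, the reason the paper's method closes for $g=2$ is precisely that your inequality $n+1\leq 2g-3$ has no solutions there: every admissible exponent vector has a part equal to $0$ or $1$, so string and dilaton alone, seeded by the one-point computation, finish the induction; for $g\geq 3$ the unreachable partitions are exactly why the $\lambda_g$ conjecture resisted elementary proof and required localization or Virasoro-type input. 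Second, some of your proposed repairs are off target: the Faber--Pandharipande $\lambda_g\lambda_{g-1}$ statement concerns the socle of the compact-type tautological ring, not the vanishing you need (the relevant elementary fact is that $\lambda_g$ restricts to zero on the non-compact-type locus, because the Hodge bundle acquires a trivial quotient at a non-separating node), and the appeal to a ``cycle-level string equation to absorb the discrepancy between $\psi$-classes on a glued curve and those on each factor'' is misplaced, since $\psi$-classes pull back to $\psi$-classes under gluing morphisms with no correction terms. As written, the proposal is an honest reduction of the theorem to its genuinely hard core, not a proof of it.
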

We are interested in giving an elementary proof of this theorem for $g=2$.  In doing so we will make use of a formula of Pixton, found in section 6 of \cite{pixton}, which gives $\lambda_g$ as a linear combination of strata.  For genus two, this formula has the form 
\begin{equation} \label{for:pix}
    \lambda_2 = \frac{1}{240} 
    \left[\parbox{1cm}{\scalebox{0.5}{\begin{tikzpicture}[scale = 1]
\Vertex[x=0, label = 1, size=1]{A}
\Edge[loopsize=1cm](A)(A)
\node[above right =0.3cm of A] (D) {$\psi$};
\end{tikzpicture}}}
\right] 
+\frac{1}{1152}
\left[\parbox{1.2cm}{\scalebox{0.5}{\begin{tikzpicture}[scale = 1]
\Vertex[x=4, size =0.5]{B}
\draw (B) to [out =315, in=45, looseness = 10] (B);
\draw (B) to [out =225, in=135, looseness = 10] (B);
\end{tikzpicture}}}
\right],
\end{equation}
where the decoration of $\psi$ on a half edge of a graph means pushing forward the $\psi$ class at the corresponding mark via the gluing morphism.





\section{Results}

Throughout the following section, ``nontrivial partition" refers to any integer partition of a number $n$ that is not $1+1+\dots+1=n$.  Further, $\Delta$  refers to the dual graph
\[
\begin{tikzpicture}
\Vertex[x=0, label = 1, size=1]{A}
\Vertex[x=2, size = 0.5]{B}
\draw (B) to [out =315, in=45, looseness = 10] (B);
\draw (A) -- (B);
\end{tikzpicture},
\]
and the correponding class in the tautological ring of $\Mo{2}{}$.

 Recall that $\dim \Mo{g}{n}=3g-3+n$, hence the intersection number of a monomial of $\psi$-classes over $\Mo{g}{n}$ is nonzero if and only if the monomial's degree is $3g-3+n$.  Indeed, for each $n$, there are $2^n$ strata in $\PI{n}$, since each stratum in $\PI{n-1}$ gives two strata in $\PI{n}$.  Calculating the intersection numbers from Theorem \ref{lambdag} amounts to calculating a sum of $2^n$ intersection numbers, one for each stratum of $\PI{n}$.  If we let $i$ denote one such stratum, $\int_{i}\Psi^K$ is equal to a product of two intersection numbers by Fubini's theorem, one being the intersection number of the monomial of $\psi$-classes over the genus 1 twig of $\Delta$ and the other being the intersection number of the monomial of $\psi$-classes over the genus 0 twig.


\begin{example} \normalfont \label{example:31}
To make the above discussion more concrete, let's investigate the $n=1$ case.  For $n=1$, $\PI{1}$ contains two strata   
\[
\begin{tikzpicture}
\Vertex[x=0, label = 1, size=1]{A}
\Vertex[x=2, size = 0.5]{B}
\draw (B) to [out =315, in=45, looseness = 10] (B);
\draw (A) -- (B);
\node[above left =0.5cm of A] (D) {};
\Edge (D)(A)
\Vertex[x=5, label = 1, size=1]{F}
\Vertex[x=7, size = 0.5]{G}
\draw (G) to [out =315, in=45, looseness = 10] (G);
\draw (F) -- (G);
\node[above left =0.5cm of G] (H) {};
\Edge (H)(G)
\end{tikzpicture}
\]
which we refer to as $A$ and $B$ respectively.  Now $\Delta$ is of codimension 2.  Therefore the pullback of $\Delta$ under $\pi$, $\PI{1}$, is of codimension 2 in the four dimensional $\M{2}{1}$,  and is therefore dimension 2.  \\
\indent Since $\dim \PI{1}=2$, a monomial of $\psi$-classes must be of degree 2 in order for the intersection number to be nonzero; further, $n=1$ so only one $\psi$-class is available to us.  Hence, the intersection number of interest is $\int_{\PI{1}}\psi^2$.\\
\indent The pullback of $\Delta$ under $\pi$ consists of A and B so 
\[\int_{\PI{1}}\psi^2=\int_A\psi^2+\int_B\psi^2\]
Then by Fubini's theorem and Section \ref{sssec:psi} we have 
\begin{align*}
    \int_{\PI{1}}\psi^2=\int_A\psi^2+\int_B\psi^2 &= \I{1}{2}\psi^2\I{0}{3}1+\I{1}{1}1\I{0}{4}\psi^2\\
    &=\frac{1}{24}
\end{align*}
\end{example}
The only partitions of $n+1$ that can appear as exponent vectors for the monomials of $\psi$-classes are nontrivial.  Indeed, for any positive integer $n$, intersection numbers are nonzero if and only if the degree of the monomial of $\psi$-classes in question is of degree $n+1$.  However, we only have one $\psi$-class for each marked point, i.e. $n$ $\psi$-classes.  Therefore the partition $1+1+\dots+1=n+1$ is unattainable. \\
It's convenient for us to adopt the following notational convention:
\begin{itemize}
    \item Given a nontrivial partition $K=k_1+\dots+k_n$ of $3g-3+n\in \mathbb{Z}^+$, 
    \[\I{g}{n}\Psi^K\defeq \I{g}{n}\psi_1^{k_1}\dots\psi_n^{k_n}\]
\end{itemize}
Let $c:X\rightarrow \textrm{pt}$ be the constant morphism and we observe that the integral symbol is just a notation for the degree of the push forward of a class via $c$:  $\int_X\alpha = c_*(\alpha)$.  We have the following commutative diagram, 
\begin{equation}\label{pushdia}
    \begin{tikzcd}
	{\overline{M}_{g,n+1}} \\
	& {\textrm{pt}} \\
	{\overline{M}_{g,n}} \\
	{\overline{M}_g}
	\arrow["{\pi_{[n-1]}}", from=3-1, to=4-1]
	\arrow["{\pi_n }"', from=1-1, to=3-1]
	\arrow["{\pi_{[n]}}"', bend right, from=1-1, to=4-1]
	\arrow["c", from=1-1, to=2-2]
	\arrow["{\tilde{c}}"', from=3-1, to=2-2]
\end{tikzcd}
\end{equation}
which will be useful in the proofs of the following lemmas. 

We are now ready to provide the relations analogous to the dilaton and string equations with the following two lemmas. 

\begin{lemma}\label{lem:dil}
Let $K$ be a nontrivial partition of $n\in \mathbb{Z}^+$, then 
\[\int_{\PI{n}}\Psi^K\psi_n=(n+1)\int_{\PI{n-1}}\Psi^K\]
\end{lemma}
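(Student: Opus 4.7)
The plan is to apply the projection formula along the forgetful morphism $\pi:\Mo{2}{n}\to\Mo{2}{n-1}$ that forgets the $n$-th (last) marked point. Functoriality of forgetful morphisms gives $\pi_{[n]}=\pi_{[n-1]}\circ\pi$, so $\PI{n}=\pi^{*}\PI{n-1}$. The projection formula then rewrites the left-hand side as
\[
\int_{\PI{n}}\Psi^K\psi_n = \int_{\Mo{2}{n-1}}\PI{n-1}\cdot\pi_{*}\bigl(\Psi^K\psi_n\bigr),
\]
reducing the lemma to the cycle identity $\pi_{*}(\Psi^K\psi_n) = (n+1)\Psi^K$ on $\Mo{2}{n-1}$.

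I would verify this cycle identity in two steps. First, using the standard comparison formula $\psi_i = \pi^{*}\psi_i + D_{i,n}$ for $i<n$, where $D_{i,n}$ is the boundary divisor parametrizing a rational tail carrying marks $i$ and $n$, I would establish that $\Psi^K\psi_n = \pi^{*}(\Psi^K)\cdot\psi_n$ in the Chow ring of $\Mo{2}{n}$. Expanding $\Psi^K$ via the comparison formula produces error terms each containing a factor of $D_{i,n}$ for some $i<n$, and these are annihilated upon multiplication by $\psi_n$ because $D_{i,n}\cdot\psi_n = 0$: the divisor $D_{i,n}$ is the image of a gluing morphism whose relevant factor is $\Mo{0}{3}$, on which $\psi_n$ restricts to zero.

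Second, I would apply the projection formula once more to get
\[
\pi_{*}\bigl(\Psi^K\psi_n\bigr) = \pi_{*}\bigl(\pi^{*}(\Psi^K)\cdot \psi_n\bigr) = \Psi^K\cdot\pi_{*}(\psi_n),
\]
and invoke the cycle form of the dilaton equation recorded in the remark following \eqref{eq:cyclestring}, which for $g=2$ with target $\Mo{2}{n-1}$ gives $\pi_{*}(\psi_n) = (2g-2+n-1)[1_{\Mo{2}{n-1}}] = (n+1)[1_{\Mo{2}{n-1}}]$. Substituting the resulting identity into the first display completes the proof.

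The main obstacle is the cycle identity $\Psi^K\psi_n = \pi^{*}(\Psi^K)\cdot\psi_n$, since the bare comparison $\psi_i - \pi^{*}\psi_i = D_{i,n}$ produces many correction terms and one must check that each is killed by $\psi_n$. Once this is in hand, the remainder of the argument is a direct application of the projection formula and the basic cycle dilaton relation.
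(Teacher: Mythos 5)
Your proposal is correct and follows essentially the same route as the paper: both hinge on the comparison formula $\psi_i=\pi^{*}\psi_i+D_{i,n}$ together with the vanishing $D_{i,n}\cdot\psi_n=0$ to show $\Psi^K\psi_n=\pi^{*}(\Psi^K)\psi_n$, then apply the projection formula and the cycle-level dilaton relation $\pi_{*}(\psi_n)=(2g-2+(n-1))[1_{\Mo{2}{n-1}}]=(n+1)[1_{\Mo{2}{n-1}}]$. If anything, your bookkeeping of the dilaton coefficient is slightly more careful than the paper's, which writes $(2g-2+n)$ before asserting it equals $n+1$.
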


\begin{proof}
For a nontrivial partition $K$ of $n$, we start by expressing each $\psi$ class as the pull-back via the morphism forgetting the last mark, plus the appropriate boundary correction (see \cite{kocknotes}, Lemma 1.3.1)
\[\Psi^K\psi_n=\prod_i(\pi^*_n\psi_i+D_{i,n})^{k_i}\psi_n\]
where $D_{i,n}$ denotes the boundary divisor where the $i$-th and $n$-th marked points are together on a rational component with no other mark.  Notice for all $i$, $\psi_n\cdot D_{i,n}=0$ and therefore
\begin{equation}\label{eq:puba}
\Psi^K\psi_n=\pi_n^*\left(\prod_i\psi_i^{k_i}\right)\psi_n\end{equation}
Chasing the commutative diagram \eqref{pushdia}, one has 
\begin{align*}
 \int_{\PI{n}}\Psi^K\psi_n&=c_*(\pi^*_n(\PI{n-1})\Psi^K\psi_n\\
 &=\tilde{c}_*\pi_{n*}(\pi^*_n(\PI{n-1})\Psi^K\psi_n).
\end{align*}
then by \eqref{eq:puba} we may bring $\Psi^K$ within the parenthesis, yielding 
\[\tilde{c}_*\pi_{n*}(\pi^*_n(\PI{n-1}\Psi^K)\psi_n).\]
By the projection formula this reduces to 
\[\tilde{c}_*(\PI{n-1}\Psi^K\cdot \pi_{n*}\psi_n).\]
This allows us to apply the dilaton equation, finally giving 
\begin{align*}
    \tilde{c}_*(\PI{n-1}\Psi^K\cdot\pi_{n*}\psi_n)&=(2g-2+n)\int_{\PI{n-1}}\Psi^K\\
    &=(n+1)\int_{\PI{n-1}}\Psi^K.
\end{align*}
\end{proof}


\begin{lemma} \label{lem:string}
Let $K$ be a nontrivial  partition of $n+1$, $n\in \mathbb{Z}^+$ with $k_n=0$ (in other words the length of $K$ is at most $n-1$). If $j\in [n-1]$ then we define $K(j)$ as
\[K(j)\defeq \begin{cases} 
      (k_1,\dots,k_j-1,\dots,k_n) & k_j>0 \\
      0 & k_j=0
   \end{cases}
\]
Then
\[\int_{\PI{n}}\Psi^K=\sum_{j\in [n-1]}\int_{\PI{n-1}}\Psi^{K(j)}\]
\end{lemma}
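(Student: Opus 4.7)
The plan is to mimic the structure of the proof of Lemma~\ref{lem:dil}, but substitute the cycle-level string equation \eqref{eq:cyclestring} for the cycle-level dilaton identity. The hypothesis $k_n = 0$ is precisely what lines things up so that string (which requires no $\psi$ at the forgotten mark) applies cleanly.

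First I would use the commutative diagram \eqref{pushdia} to factor the pushforward to a point through $\pi_n$:
\[
\int_{\PI{n}} \Psi^K = c_*\bigl(\pi_n^*(\PI{n-1}) \cdot \Psi^K\bigr) = \tilde{c}_*\,\pi_{n*}\bigl(\pi_n^*(\PI{n-1}) \cdot \Psi^K\bigr).
\]
Since $\pi_n$ is flat and proper, the projection formula pulls $\PI{n-1}$ out of the pushforward to give
\[
\tilde{c}_*\bigl(\PI{n-1} \cdot \pi_{n*}\Psi^K\bigr).
\]

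The crux is to evaluate $\pi_{n*}\Psi^K$. Because $k_n = 0$, we have $\Psi^K = \psi_1^{k_1}\cdots \psi_{n-1}^{k_{n-1}}$, with no $\psi_n$ factor, so the cycle-level string identity \eqref{eq:cyclestring} applies to give
\[
\pi_{n*}\Psi^K = \sum_{j=1}^{n-1} \psi_1^{k_1}\cdots \psi_j^{k_j-1}\cdots \psi_{n-1}^{k_{n-1}} = \sum_{j\in[n-1]} \Psi^{K(j)},
\]
where the convention that $\Psi^{K(j)} = 0$ whenever $k_j = 0$ dovetails with the convention $\psi_j^{-1} = 0$ used in the string equation. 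Substituting, distributing $\tilde{c}_*$ over the sum, and re-reading each summand as an integral over $\PI{n-1}$ yields the desired identity.

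The main potential obstacle is subtlety at the cycle level: one needs \eqref{eq:cyclestring} as a genuine equality of Chow classes (not just of numbers), so that intersecting with $\PI{n-1}$ before integrating is legal. Since the remark in Section~\ref{sssec:psi} already asserts this cycle-level form, the step is essentially a black-box invocation, and the proof reduces to careful bookkeeping of the diagram chase and the index set of the resulting sum.
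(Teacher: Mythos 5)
Your proposal matches the paper's proof essentially step for step: the same diagram chase through \eqref{pushdia}, the same application of the projection formula to extract $\PI{n-1}$, and the same invocation of the cycle-level string equation \eqref{eq:cyclestring} to evaluate $\pi_{n*}\Psi^K$. Your explicit remark that $k_n=0$ is what makes the string identity applicable, and that the convention $\Psi^{K(j)}=0$ for $k_j=0$ matches $\psi_j^{-1}=0$, is a correct and welcome clarification of points the paper leaves implicit.
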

\begin{proof}

We start again with diagram \eqref{pushdia}, 
\begin{align*}
    \int_{\PI{n}}\Psi^K  &= c_*(\PI{n}\Psi^K)\\
    &=\tilde{c}_*\pi_{n*}(\pi_n^*(\PI{n-1})\Psi^K).
\end{align*}
Then, by the projection formula
\[\tilde{c}_*\pi_{n*}(\pi_n^*(\PI{n-1})\Psi^K)=\tilde{c}_*(\PI{n-1}\cdot \pi_{n*}\Psi^K).\]
Finally the cycle theoretic string equation \eqref{eq:cyclestring} gives 
\[\tilde{c}_*(\PI{n-1}\cdot \pi_{n*}\Psi^K)=\tilde{c}_*\left(\PI{n-1}\cdot \sum_{j\in[n-1]}\Psi^K\right).\]
In terms of intersection numbers 
\[\int_{\PI{n}}\Psi^K=\sum_{j\in [n-1]}\int_{\PI{n-1}}\Psi^{K(j)}.\]
\end{proof}


This next lemma is just a generalization of Pascal's rule for binomial coefficients. It may be proved by an elementary induction argument, so we omit its proof.  
\begin{lemma}\label{lem:pas}
For $m\in \mathbb{Z}^+$, $m \geq 2$, $k_1,\dots,k_m\in \mathbb{Z}^+$ with $n=k_1+\dots+k_m\geq 1$ we have 
\[\sum_{j\in[n-1]}\binom{n-1}{K(j)}=\binom{n}{K}\]
\end{lemma}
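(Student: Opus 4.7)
The plan is to verify the identity by a direct algebraic computation, which is more concise than the induction the authors allude to. The key observation is that, under the convention $K(j)=0$ whenever $k_j=0$ (which kills the corresponding multinomial coefficient), the sum effectively runs only over those indices $j$ with $k_j \geq 1$. For each such $j$, the factorial formula for multinomial coefficients gives
\[
\binom{n-1}{K(j)} = \frac{(n-1)!}{k_1!\cdots(k_j-1)!\cdots k_m!} = \frac{k_j}{n}\binom{n}{K}.
\]
Summing over $j$ and using $k_1 + \dots + k_m = n$ then yields
\[
\sum_{j}\binom{n-1}{K(j)} = \frac{1}{n}\binom{n}{K}\sum_{j=1}^{m}k_j = \binom{n}{K},
\]
which is the claim.

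A more transparent combinatorial version runs as follows. The multinomial coefficient $\binom{n}{K}$ counts words of length $n$ over an $m$-letter alphabet in which the $i$-th letter appears exactly $k_i$ times. Partitioning such words according to their final letter gives, for each $j$ with $k_j \geq 1$, a class in bijection with words of length $n-1$ on the same alphabet having letter counts $K(j)$; this class therefore has size $\binom{n-1}{K(j)}$. Summing over $j$ recovers the identity. One could alternatively follow the induction the authors have in mind: induct on $m$, taking as the base case $m=2$ the ordinary Pascal relation $\binom{n-1}{k-1}+\binom{n-1}{k}=\binom{n}{k}$, and in the inductive step use the factorization $\binom{n}{K} = \binom{n}{k_m}\binom{n-k_m}{k_1,\dots,k_{m-1}}$ combined with Pascal's rule on the first factor.

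None of these strategies presents a genuine obstacle; the only point requiring care is the bookkeeping for the convention $K(j)=0$ when $k_j=0$, which is what makes the range of summation in the statement be $[n-1]$ harmless even though only indices $j \in [m]$ actually contribute.
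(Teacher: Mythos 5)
Your argument is correct, and in fact the paper supplies no proof at all for this lemma: the authors simply remark that it ``may be proved by an elementary induction argument'' and omit it. Your first computation is the cleanest route: from the factorial formula one gets $\binom{n-1}{K(j)} = \tfrac{k_j}{n}\binom{n}{K}$ for each $j$ with $k_j\ge 1$, and summing over $j$ uses only $\sum_j k_j = n$; this avoids any induction and makes transparent why the terms with $k_j=0$ (which the convention $K(j)=0$ sends to zero) and the spurious indices $j\in[n-1]\setminus[m]$ are harmless. Your combinatorial version (classifying words by their last letter) proves the same identity and is the standard bijective explanation; your sketch of the induction on $m$ via the factorization $\binom{n}{K}=\binom{n}{k_m}\binom{n-k_m}{k_1,\dots,k_{m-1}}$ is presumably close to what the authors had in mind, though as you note it requires more bookkeeping than the direct computation. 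The only point worth flagging is one you already address: the summation range $[n-1]$ in the statement does not match the number $m$ of parts, so any written proof must explicitly adopt the reading that only $j\in[m]$ with $k_j\ge 1$ contribute. With that caveat handled, your proof is complete and fills a gap the paper leaves open.
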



These three lemmas allow us to prove the following first major result of this paper.
\begin{theorem} \label{thm:pullback}
Given a nontrivial partition $K$ of $n+1$ with $n\in \mathbb{Z}^+$, 
\begin{equation}\label{eq:bino}\int_{\PI{n}}\Psi^K=\frac{1}{24}\binom{n+1}{K}\end{equation}
\end{theorem}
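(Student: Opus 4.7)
The plan is to prove Theorem \ref{thm:pullback} by induction on $n$, with the three preparatory lemmas doing all of the work. The base case is $n=1$: the unique nontrivial partition of $2$ is $(2)$, and Example \ref{example:31} provides $\int_{\PI{1}}\psi_1^2 = \frac{1}{24} = \frac{1}{24}\binom{2}{2}$.

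For $n\geq 2$, I begin with the observation that both sides of \eqref{eq:bino} are invariant under permutations of the indices $1,\dots,n$: $\PI{n}$ is the pullback of a class from $\Mo{2}{}$ and is therefore symmetric under relabeling of marked points, and the multinomial coefficient is trivially symmetric. Hence I may reorder the entries of $K$ at will. Since $K$ has $n$ parts summing to $n+1$ and is not all ones, either \textbf{(A)} $K$ has a zero entry, or \textbf{(B)} $K$ is a permutation of $(2,1,\dots,1)$. In case (A) I relabel so that $k_n=0$; in case (B), which requires $n\geq 2$ so that an entry equal to $1$ is available, I relabel so that $k_n=1$.

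In case (A), Lemma \ref{lem:string} reduces the integral to $\sum_{j\in[n-1]}\int_{\PI{n-1}}\Psi^{K(j)}$. Each nonzero $K(j)$ is a partition of $n$ on the remaining marks and must be nontrivial (an all-ones tuple of length $n-1$ would sum to $n-1 \neq n$), so the inductive hypothesis yields $\int_{\PI{n-1}}\Psi^{K(j)} = \frac{1}{24}\binom{n}{K(j)}$; Lemma \ref{lem:pas} then collapses the sum to $\frac{1}{24}\binom{n+1}{K}$. In case (B), I write $\Psi^K = \Psi^{K^-}\psi_n$ with $K^- = (k_1,\dots,k_{n-1},0)$. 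This $K^-$ is still a nontrivial partition of $n$ because the unique entry equal to $2$ sits among $k_1,\dots,k_{n-1}$, so Lemma \ref{lem:dil} applies and gives $\int_{\PI{n}}\Psi^K = (n+1)\int_{\PI{n-1}}\Psi^{K^-}$. By the inductive hypothesis this equals $\frac{n+1}{24}\binom{n}{K^-} = \frac{1}{24}\binom{n+1}{K}$, the last equality holding because $k_n=1$ contributes only a factor of $1!$ to the denominator of the multinomial.

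The main obstacle is the bookkeeping: ensuring the dichotomy exhausts all nontrivial partitions, checking that the reduced partitions remain nontrivial so the inductive hypothesis applies, and confirming that the permutations used to position a helpful entry at index $n$ are legitimate. Once that is settled, the theorem follows mechanically from the three lemmas.
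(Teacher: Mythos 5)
Your proposal is correct and follows essentially the same route as the paper's own proof: induction on $n$ with Example \ref{example:31} as the base case, splitting into the case of a partition with a zero part (handled by Lemma \ref{lem:string} and Lemma \ref{lem:pas}) and the case with no zero part, which forces a part equal to $1$ and is handled by Lemma \ref{lem:dil}. Your added bookkeeping --- the explicit appeal to symmetry under relabeling and the checks that the reduced partitions remain nontrivial --- is a slightly more careful rendering of steps the paper leaves implicit, not a different argument.
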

\begin{proof}
We proceed by induction with Example \ref{example:31} providing the base case.
Assume \eqref{eq:bino} holds for all nontrivial partitions of $n$ and let $K$ be a partition of $n+1$ with one component equal to $0$.  Then we may apply Lemma \ref{lem:string} giving 
\[\int_{\PI{n}}\Psi^K=\sum_{j\in[n-1]}\int_{\PI{n-1}}\Psi^{K(j)}\]
By the inductive hypothesis 
\[ \sum_{j\in[n-1]}\int_{\PI{n-1}}\Psi^{K(j)}=\sum_{j\in[n-1]}\frac{1}{24}\binom{n}{K(j)} \]
Them Lemma \ref{lem:pas} and the initial condition yield
\[ \sum_{j\in[n-1]}\frac{1}{24}\binom{n}{K(j)}=\frac{1}{24}\binom{n+1}{K} \]
If $K$ is a nontrivial partition of $n+1$ with no component equal to $0$, then some component must be $1$.  Indeed, if all components were greater than $1$ then their sum must be greater than $n+1$.  Hence we may write $\Psi^K=\Psi^{K'}\psi_n$ and apply Lemma \ref{lem:dil} finding
\[ \int_{\PI{n}}\Psi^{K'}\psi_n=(n+1)\int_{\PI{n-1}}\Psi^{K'}. \]
So by the inductive hypothesis 
\[ (n+1)\int_{\PI{n-1}}\Psi^{K'}=(n+1)\frac{1}{24}\binom{n}{K'}=\frac{1}{24}\binom{n+1}{K} \]
\end{proof}


\noindent With Theorem \ref{thm:pullback} we now use Pixton's formula  \eqref{for:pix} to give a proof of the $\lambda_g$ conjecture for $g=2$. 
\begin{theorem}\label{thm:lambdatwo}
Given a nontrivial partition $K$ of $n+1$ for $n\in \mathbb{Z}^+$, 
\[\I{2}{n}\lambda_2\Psi^K=\frac{7}{24\cdot 8\cdot 30}\binom{n+1}{K}.\]
\end{theorem}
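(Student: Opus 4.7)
The plan is to use Pixton's boundary expression \eqref{for:pix} to split $\int_{\Mo{2}{n}}\lambda_2\Psi^K$ into two pieces that can be evaluated independently: one reduces to Theorem \ref{thm:pullback} via $\Delta$, and the other to a standard genus zero $\psi$-integral via $[G_2]$.

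The first task is to rewrite the $\psi$-decorated term in \eqref{for:pix} in terms of $\Delta$ and $[G_2]$. By definition $[G_1,\psi]=\iota_{1\ast}(\psi_\ast)$ where $\iota_1\colon\Mo{1}{2}\to\Mo{2}{}$ glues the two marks together. Starting from $\int_{\Mo{1}{1}}\psi_1=\tfrac{1}{24}$ and the fact that the Picard group of $\Mo{1}{1}$ is generated by the boundary $\delta_0$, one has $\psi_1=\tfrac{1}{24}\,\delta_0$ on $\Mo{1}{1}$ (in the pushforward convention of the excerpt's remark). Pulling this through the forgetful morphism $\Mo{1}{2}\to\Mo{1}{1}$ via the boundary comparison $\psi_i=\pi^\ast\psi_i+D_{i,n+1}$ yields
\[
\psi_\ast=\tfrac{1}{24}\,\delta_{\mathrm{irr}}+D_{\ast,\bullet}
\]
on $\Mo{1}{2}$. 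Pushing forward through $\iota_1$, the composition $\Mo{0}{4}\to\Mo{1}{2}\to\Mo{2}{}$ is precisely the double self-gluing defining $[G_2]$ and the composition $\Mo{1}{1}\times\Mo{0}{3}\to\Mo{1}{2}\to\Mo{2}{}$ is the gluing defining $\Delta$, giving
\[
[G_1,\psi]=\Delta+\tfrac{1}{24}[G_2]\quad\text{in } A^\ast(\Mo{2}{}).
\]
Substituting into \eqref{for:pix} simplifies to $\lambda_2=\tfrac{1}{240}\Delta+\tfrac{1}{960}[G_2]$.

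Using the stability $\pi_{[n]}^\ast\lambda_2=\lambda_2$, I would pull this identity back to $\Mo{2}{n}$ and integrate against $\Psi^K$. The $\Delta$-contribution is $\int_{\PI{n}}\Psi^K=\tfrac{1}{24}\binom{n+1}{K}$, which is exactly Theorem \ref{thm:pullback}. For the $[G_2]$-contribution, $\pi_{[n]}^\ast[G_2]$ is the pushforward of the fundamental class of $\Mo{0}{n+4}$ along the double self-gluing $\tilde\iota_2\colon\Mo{0}{n+4}\to\Mo{2}{n}$, so by the projection formula
\[
\int_{\Mo{2}{n}}\pi_{[n]}^\ast[G_2]\cdot\Psi^K=\int_{\Mo{0}{n+4}}\psi_1^{k_1}\cdots\psi_n^{k_n}=\binom{n+1}{K},
\]
the last equality being the standard genus zero $\psi$-integral, obtained by iterating the string equation down to $\int_{\Mo{0}{3}}1=1$. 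Combining:
\[
\int_{\Mo{2}{n}}\lambda_2\Psi^K=\tfrac{1}{240}\cdot\tfrac{1}{24}\binom{n+1}{K}+\tfrac{1}{960}\binom{n+1}{K}=\tfrac{7}{5760}\binom{n+1}{K}=\tfrac{7}{24\cdot 8\cdot 30}\binom{n+1}{K}.
\]

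The step I expect to require the most care is the identity $[G_1,\psi]=\Delta+\tfrac{1}{24}[G_2]$: both the derivation of the boundary decomposition of $\psi_\ast$ on $\Mo{1}{2}$ and the identification of the two pushforward classes need careful tracking of the automorphism factors of the relevant gluing morphisms, since Pixton's convention weights each boundary stratum by the pushforward of the fundamental class of the source (product of moduli spaces) rather than by the class of the image closure. Everything else is a short application of Theorem \ref{thm:pullback}, the projection formula, and classical genus zero $\psi$-intersection theory.
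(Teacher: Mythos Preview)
Your proposal is correct and follows essentially the same route as the paper: you rewrite Pixton's expression by expanding $\psi_\ast$ on $\Mo{1}{2}$ via $\psi_1=\tfrac{1}{24}\delta_0$ on $\Mo{1}{1}$ plus the boundary comparison, push forward to get $\lambda_2=\tfrac{1}{240}\Delta+\tfrac{1}{960}[G_2]$ (the paper writes the equivalent $\lambda_2=\bigl(\tfrac{1}{1152}+\tfrac{1}{240\cdot 24}\bigr)\Delta_0+\tfrac{1}{240}\Delta$ without combining the coefficients), then evaluate the $\Delta$-piece by Theorem~\ref{thm:pullback} and the $[G_2]$-piece by the genus-zero multinomial formula on $\Mo{0}{n+4}$. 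Your caution about automorphism factors in the pushforward step is well placed and matches exactly the point the paper's conventions are designed to handle.
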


\begin{proof}
We first convert  \eqref{for:pix}  into a linear combination of strata.  We have $    \left[\parbox{.7cm}{\scalebox{0.3}{\begin{tikzpicture}[scale = 1]
\Vertex[x=0, label = 1, size=1]{A}
\Edge[loopsize=1cm](A)(A)
\node[above right =0.3cm of A] (D) {$\psi$};
\end{tikzpicture}}}
\right] =\textrm{gl}_*(\psi_1)$ for the gluing morphism $\textrm{gl}:\Mo{1}{2}\rightarrow\overline{M}_2$ and $\psi_1$ on $\Mo{1}{2}$.

On $\Mo{1}{1}$ we can express the class $\psi_1$ as:  
$$\psi_1  = \frac{1}{24}  \left[\parbox{ 1cm}{\scalebox{0.5}{\begin{tikzpicture}
\Vertex[x=0, size=0.5]{A}
\node[right =0.5 of A](B){};
\draw (A) -- (B);
\draw (A) to [out =225, in=135, looseness = 10] (A);
\end{tikzpicture}}}
\right]$$
Therefore, by \cite[Lemma 1.3.1]{kocknotes} $\psi_1$ on $\Mo{1}{2}$ can be expressed as
\[
\frac{1}{24}
\left[\parbox{ 1cm}{\scalebox{0.5}{\begin{tikzpicture}
\Vertex[x=0, size=0.5]{A}
\node[above right =0.5cm of A] (D) {};
\node[below right =0.5cm of A](P){};
\draw (A) -- (P);
\draw (D)--(A);
\draw (A) to [in =135, out=225, looseness = 10] (A);
\end{tikzpicture}
}}
\right]
+
\left[\parbox{1.5cm}{\scalebox{0.5}{\begin{tikzpicture}
\Vertex[x=5, label = 1, size=1]{F}
\Vertex[x=7, size = 0.5]{G}
\draw (F) -- (G);
\node[above right =0.5cm of G] (H) {};
\node[below right =0.5cm of G] (I) {};
\draw (G) -- (H);
\draw (G) -- (I);
\end{tikzpicture}
}}
\right].
\]

After pushforward we can then rewrite \eqref{for:pix} as:
\begin{equation} \label{eq:almostthere}
\lambda_2=\left(\frac{1}{1152}+\frac{1}{240\cdot24}\right)\Delta_0+\frac{1}{240}\Delta,\end{equation}
where $\Delta_0$ denotes $\textrm{gl}_*(\overline{M}_{0,5})$ via the gluing morphism that glues two distinct pairs of nodes.

From \eqref{eq:almostthere} and the fact that $\lambda_2$ is stable under pull-back, we have 
\begin{align*}
     \I{2}{n}\Psi^{K}\lambda_2 &= \I{2}{n}\Psi^{K}\pi_{[n]}^\ast\left(\left(\frac{1}{1152}+\frac{1}{240\cdot 24}\right)\Delta_0+\frac{1}{240}\Delta\right) \\
    &= \left(\frac{1}{1152}+\frac{1}{240\cdot 24}\right)\int_{\pi^*_{[n]}(\Delta_0)}\Psi^K+\frac{1}{240}\int_{\PI{n}}\Psi^K
\end{align*}
Since $\pi_{[n]}^\ast(\Delta_0)=\textrm{gl}_*(\overline{M}_{0,2g+n})$ and in our case $g=2$, we have 
\begin{equation} \label{last}
\int_{\pi^*_{[n]}(\Delta_0)}\Psi^K= \int_{\overline{M}_{0,2g+n}}\Psi^K=\binom{2g+n-3}{K}=\binom{n+1}{K}\end{equation}
Therefore by \eqref{last} and Theorem \ref{thm:pullback}
\begin{align*}
    \left(\frac{1}{1152}+\frac{1}{240\cdot 24}\right)\int_{\pi^*_{[n]}(\Delta_0)}\Psi^K+\frac{1}{240}\int_{\PI{n}}\Psi^K &= \left(\frac{1}{1152}+\frac{1}{240\cdot 24}+\frac{1}{240\cdot 24}\right)\binom{n+1}{K}\\
    &=\frac{7}{8\cdot 24\cdot 30}\binom{n+1}{K}.
\end{align*}
\end{proof}

\section{Acknowledgments}
The first author would like to acknowledge Professor Renzo Cavalieri for suggesting the topic of this paper and his invaluable mentorship.

\bibliographystyle{alpha}
\bibliography{biblio.bib}

\begin{thebibliography}{MPS21}

\bibitem[EH16]{andallthat}
David Eisenbud and Joe Harris.
\newblock {\em 3264 and all that: A second course in algebraic geometry}.
\newblock Cambridge University Press, 2016.

\bibitem[FP03]{lambdaPf}
C.~Faber and R.~Pandharipande.
\newblock Hodge integrals, partition matrices, and the $\lambda_g$ conjecture.
\newblock {\em Annals of Mathematics}, 157(1):97--124, 2003.

\bibitem[GP98]{lambda}
Ezra Getzler and Rahul Pandharipande.
\newblock Virasoro constraints and the chern classes of the hodge bundle.
\newblock {\em Nuclear Physics B}, 530(3):701--714, 1998.

\bibitem[HM06]{harrisMod}
Joe Harris and Ian Morrison.
\newblock {\em Moduli of curves}, volume 187.
\newblock Springer Science \& Business Media, 2006.

\bibitem[Koc]{kocknotes}
Joachim Kock.
\newblock Notes on psi classes.

\bibitem[KV07]{kockQC}
Joachim Kock and Israel Vainsencher.
\newblock {\em An invitation to quantum cohomology: Kontsevich's formula for
  rational plane curves}, volume 249.
\newblock Springer Science \& Business Media, 2007.

\bibitem[MPS21]{pixton}
Samouil Molcho, Rahul Pandharipande, and Johannes Schmitt.
\newblock The hodge bundle, the universal 0-section, and the log chow ring of
  the moduli space of curves.
\newblock {\em arXiv preprint arXiv:2101.08824}, 2021.

\bibitem[Vak08]{VakilGW}
R.~Vakil.
\newblock {\em The Moduli Space of Curves and Gromov--Witten Theory}, pages
  143--198.
\newblock Springer Berlin Heidelberg, Berlin, Heidelberg, 2008.

\bibitem[Wik20]{Wiki}
Lambda g conjecture.
\newblock \url{https://en.wikipedia.org/wiki/Lambda_g_conjecture}, Nov 2020.

\end{thebibliography}

\end{document}